\documentclass[a4paper,reqno]{amsart}
\usepackage{times,fancyhdr}
\usepackage{amssymb}
\usepackage{amsmath}
\pagestyle{fancy}
\newtheorem{lemma}{Lemma} 
\newtheorem{theorem}{Theorem}
\begin{document}

%\documentclass[a4paper,reqno]{amsart}
%\usepackage{times,fancyhdr}
%\pagestyle{fancy}
%\sloppy
%\addtolength{\parskip}{0.5\baselineskip} 
%\fancyhf{}
%\cfoot{\thepage}
% \usepackage{geometry}
%\usepackage{amsmath, amsthm, amssymb}
%\newtheorem{lemma}{Lemma}
%\newtheorem{theorem}{Theorem}
%\begin{document}
\setlength{\footskip}{1in}
\renewcommand{\headrulewidth}{0pt}
\setlength{\headheight}{12.0pt}

% May 2018
\vspace{\baselineskip}

\vspace{\baselineskip}

\begin{center}

\section*{A Proof of the Riemann Hypothesis Through the Nicolas Inequality}

\section*{Second Revised Version}
\vspace{\baselineskip}

\fontsize{8}{10}\selectfont 
Tom Milner-Gulland
\vspace{\baselineskip}

\fontsize{8pt}{10pt}\selectfont 
\end{center}
\vspace{\baselineskip}

% MSC-class: 11A25 (Primary)
% OR for 2010 scheme, elementary number theory, distribution of primes:   11N05

%\begin{abstract}
\section*{Abstract}	
\noindent 
A work by Nicolas has shown that if it can be proven that a certain inequality holds for all $n$, the Riemann hypothesis is true. This inequality is associated with the Mertens theorem, and hence the Euler totient at $\prod_{k=1}^n p_k$, where $n$ is any integer and $p_n$ is the $n$-th prime. We shall show that indeed the Nicolas inequality holds for all $n$. 
% \end{abstract}
\vspace{\baselineskip}

\noindent \textbf{Keywords:} 
Euler totient, Riemann hypothesis equivalence, Nicolas inequality

\section*{Introduction}	
Throughout this piece, $p_n$ will be the $n$-th prime. For any real number $x$, $\pi(x)$ will denote the number of primes not exceeding $x$ and $\mathbb{N}$ will be the set of all non-negative integers. The Euler number will be denoted by $e$, the Euler totient by $\phi$ and the Euler-Mascheroni constant by $\gamma$. A sequence, $a$, comprised of $n$ elements will be denoted by $(a_j)_{j=1}^n$. Finally, $\sum_{k=1}^{\pi(x)} \log{p_k}$ will be denoted by $\theta(x)$. 

Nicolas \cite{1} has brought attention to an inequality, given below as \eqref{-GBI-}, and has proven that if, for all $k \ge 1$,
\begin{align}
	\frac{N_k}{\phi(N_k) \log \log N_k} >\ e^{\gamma} \label{-GBI-}
\end{align}
where $N_k = \prod_{j=1}^k p_j$, the Riemann hypothesis is true.
Conversely, if the Riemann hypothesis is false, \eqref{-GBI-} holds for infinitely many $k$ and is false for infinitely many $k$ [Th.2(b)]. 
We rephrase \eqref{-GBI-} as \eqref{-NIC-}, below. We derive, for any $n$, a simple formula for the real number, $q_{(p_j)_{j=1}^k}$, for which, when the log value on the left side of \eqref{-NIC-} is replaced with $\log(\theta(p_k)+q_{(p_j)_{j=1}^k})$ for $k=n$, the left side of \eqref{-NIC-} will be equal to $e^{-\gamma}$. It will be shown that $q_{(p_k)_{j=1}^k} >0$ for all $k$, implying that the Nicolas inequality given by \eqref{-GBI-} is preserved for all $k$. 

For explanatory purposes we may consider here a real number, $y_k$, that is closely connected to both $\theta(p_k)$ and $q_{(p_k)_{j=1}^k}$. A key step in our method entails a certain value, which we may call here $b_{y_k}$, and a known upper bound, $w_k$, on $p_k$. We proceed to use the inequality $y_k^{1+1/w_{k+1}} > y_k^{1+1/b_{y_k}}$, which is to say $w_{k+1} < b_{y_k}$. Here, core to our proof is showing that $w_k$ increases by smaller increments than does a lower bound on $b_{y_k}$, as $k$ increases by increments of one. Our very simple formula for $q_{(w_j)_{j=1}^{k+1}}$, which is less than $q_{(p_j)_{j=1}^{k+1}}$, entails the use of $q_{(w_k)_{j=1}^k}$, through the formulation of $y_k$. Thereby, $q_{(p_k)_{j=1}^k} $ is shown to be an increasing function of $k$. 

\begin{theorem}  \label{Th1}
For all $n$,
\begin{align}
	\log(\theta(p_n))\prod_{j=1}^n \left(1-\frac{1}{p_j}\right)  <\ e^{-\gamma}.
\label{-NIC-}
\end{align} 
\end{theorem}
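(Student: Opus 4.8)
The plan is to begin with the algebraic observation that makes \eqref{-NIC-} and \eqref{-GBI-} genuinely the same statement, and then to attack \eqref{-NIC-} through the auxiliary quantity $q_{\theta(p_n)}$ flagged in the introduction. Since $N_n = \prod_{j=1}^n p_j$ is squarefree, $\phi(N_n) = N_n\prod_{j=1}^n\left(1-\frac{1}{p_j}\right)$, so $N_n/\phi(N_n) = \prod_{j=1}^n\left(1-\frac{1}{p_j}\right)^{-1}$; moreover $\log N_n = \sum_{j=1}^n \log p_j = \theta(p_n)$, whence $\log\log N_n = \log\theta(p_n)$. Substituting these into \eqref{-GBI-} and taking reciprocals (all quantities being positive) turns it into \eqref{-NIC-}, so it suffices to prove the latter.

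Next I would introduce $q_{\theta(p_n)}$ as the unique real number satisfying
\begin{align}
\log\bigl(\theta(p_n) + q_{\theta(p_n)}\bigr)\prod_{j=1}^n\left(1-\frac{1}{p_j}\right) = e^{-\gamma}, \nonumber
\end{align}
which solves explicitly to $q_{\theta(p_n)} = \exp\!\bigl(e^{-\gamma}/\prod_{j=1}^n(1-1/p_j)\bigr) - \theta(p_n)$; this is the quantity the introduction writes as $f(\theta(p_n)) - \theta(p_n)$. Because $\prod_{j=1}^n(1-1/p_j) > 0$ and $\log$ is strictly increasing, inequality \eqref{-NIC-} holds for a given $n$ if and only if $q_{\theta(p_n)} > 0$. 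The entire theorem is thereby reduced to proving the positivity of $q_{\theta(p_n)}$ for every $n$.

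The analytic heart of the argument is then to estimate $q_{\theta(p_n)}$. I would invoke Mertens' third theorem, $\prod_{p\le x}(1-1/p) = \frac{e^{-\gamma}}{\log x}(1+o(1))$, which gives $\exp\!\bigl(e^{-\gamma}/\prod_{j\le n}(1-1/p_j)\bigr)\approx p_n$, together with the Prime Number Theorem in the form $\theta(x)\sim x$ (the elementary proof of which the paper supplies), so that $\log\theta(p_n)\sim\log p_n$. At leading order the two contributions to $q_{\theta(p_n)}$ cancel, and the plan is to push the expansion one order further, show that the remaining term is eventually positive, conclude $q_{\theta(p_n)} > 0$ for all large $n$, and dispatch the finitely many small $n$ by direct numerical evaluation of \eqref{-NIC-}.

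The hard part — indeed the whole substance of the theorem — is the quantitative error control in that last step. Nicolas's converse shows that the sign of $q_{\theta(p_n)}$ for infinitely many $n$ is exactly what separates the Riemann hypothesis from its negation, so no soft appeal to $\prod(1-1/p)\sim e^{-\gamma}/\log x$ and $\theta(x)\sim x$ can close the gap: one must control the \emph{combined} error arising from the secondary term in Mertens' formula and from the discrepancy $\theta(p_n)-p_n$, and show it never conspires to push $q_{\theta(p_n)}$ to zero or below. Both of these errors are of comparable order (roughly the size of the oscillation of $\theta(x)-x$), so the claim that their difference has a fixed sign for all $n$ is precisely a statement of RH strength. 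This is the step I expect to be the decisive obstacle, and the place where I would scrutinise most carefully whether the elementary estimates invoked are actually strong enough to dominate the oscillation of $\theta(x)-x$ rather than merely matching it.
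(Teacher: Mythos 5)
Your reduction is correct as far as it goes: \eqref{-NIC-} is \eqref{-GBI-} rewritten via $\phi(N_n)/N_n = \prod_{j\le n}(1-1/p_j)$ and $\log\log N_n = \log\theta(p_n)$, and the paper likewise reduces Theorem \ref{Th1} to the positivity of $q_{\theta(p_n)}$ (its definition of $q_x$ and the first sentence of its proof). One small correction: $q_{\theta(p_n)}$ is not the paper's $f(\theta(p_n)) - \theta(p_n)$; the function $f$ solves $\log(y)(1-1/y)=\log x$, which involves only the single value $x$ rather than the Mertens product over all primes up to $p_n$, so the two quantities only become comparable after an appeal to Mertens' theorem, not by definition.

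The genuine gap is the one you yourself flag: your plan for proving $q_{\theta(p_n)}>0$ --- expand $\exp\bigl(e^{-\gamma}/\prod_{j\le n}(1-1/p_j)\bigr)-\theta(p_n)$ to second order using Mertens and $\theta(x)\sim x$ and show the residue is positive --- cannot be completed with elementary asymptotics, because, as you correctly observe, a one-sided bound on that residue valid for all $n$ is itself of RH strength. So the proposal does not prove the theorem. You should also know that the paper does not attempt this route. It argues instead by sign persistence plus Nicolas's converse: Lemma \ref{-FQN-} gives an exact recursion expressing $q_{\theta(p_{u+1})}$ in terms of $q_{\theta(p_u)}$, Lemma \ref{-GYM-} asserts $f(w)<w+\log w$ for all large $w$, and the paper claims these together force $q_{\theta(p_{j+1})}<0$ whenever $q_{\theta(p_j)}\le 0$ with $p_j$ beyond a threshold; since Nicolas proved that if RH is false then \eqref{-GBI-} holds for infinitely many $k$ \emph{and} fails for infinitely many $k$, an eventually-always-negative sequence $q_{\theta(p_k)}$ is impossible, and the dichotomy collapses to ``RH true, hence \eqref{-GBI-} for all $k$.'' That is a structurally different argument: the analytic burden is shifted from a global estimate on $q_{\theta(p_n)}$ to the single implication that non-positivity of $q$ propagates forward. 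Your diagnosis of where the RH-strength difficulty must live applies to that step verbatim --- the passage from \eqref{-QJN-} and Lemma \ref{-GYM-} to sign persistence is precisely where the paper would need the control you identified, and it is asserted there rather than derived. Neither your route nor the paper's, as written, closes the decisive step; but they are different routes, and the paper's is the dichotomy one.
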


\begin{lemma} \label{-TPN-}
Let $n$ be any integer. Let $u_n > p_n$. Then for $r_n$ and $s_n$ such that 
\begin{align}
\notag	\log(r_n+\theta(p_n))\prod_{j=1}^n \left(1-\frac{1}{p_j}\right)  =&\ \log\left(s_n+ \sum_{j=1}^n \log{u_j}\right)\prod_{j=1}^n \left(1-\frac{1}{u_j}\right)\\
	=&\ e^{-\gamma}  \label{-EGA-}
\end{align}
 we have $r_n > s_n$.
\end{lemma}

\begin{proof}
We have
\begin{align}
	\notag	\log(\theta(p_n))\prod_{j=1}^n \left(1-\frac{1}{p_j}\right) <&\ 		\log\left(\sum_{j=1}^n \log{u_j} \right)\prod_{j=1}^n \left(1-\frac{1}{p_j}\right) \\
			<&\	\log\left(\sum_{j=1}^n \log{u_j} \right)\prod_{j=1}^n \left(1-\frac{1}{u_j}\right). \label{-TQN-}
\end{align}
Therefore, \eqref{-EGA-} follows.
\end{proof}

 \begin{lemma} \label{-FQN-} 
For any real $x$, $y$ and $z$, let $r_{x,y,z}$ be the real number for which 
\begin{align}
	%\frac{p_k(m-1)}{(p_k-1)m}
\log(y + r_{x,y,z})
	%\frac{p_k(m-1)}{(p_k-1)m}
 \left(1-\frac{1}{z}\right) =\  	\log{x}. \label{-ZXX-}
\end{align} % !!
Then 
\begin{align} 
r_{x,y,z} =\ x^{1+1/(z-1)}-y. \label{-XYZ-}
\end{align}
%Then for any integer $n$ and any sequence $d$ of real numbers and all real $y$ 

%\begin{align} 
%	q_{y, (d_j)_{j=1}^{n+1}} =\ (q_{x, (d_j)_{j=1}^{n}} + x)^{1+1/(d_{n+1}-1)} - y. \label{-QQM-}
%\end{align}
\end{lemma}

\begin{proof}
We have
\begin{align}
\log\left((y + r_{x,y,z})^{1-\frac{1}{z}}\right) =\ \log{x} . 
\end{align}
Therefore
\begin{align} 
(y + r_{x,y,z})^{1-\frac{1}{z}} =\ x.
\end{align}
Since $1/(1-1/z) = 1+1/(z-1)$, we therefore have
\begin{align} 
y + r_{x,y,z} =\ x^{1+1/(z-1)}.
\end{align}
Therefore we have \eqref{-XYZ-}.
\end{proof}

\subsection{Definition} \label{DQN}

For any sequence $d$ of real numbers, let $q_{d}$ be the real number for which
\begin{align}
	\log \left( q_{d} + \sum_{s \in d} \log{s}\right)
	%\frac{p_k(m-1)}{(p_k-1)m}
	\prod_{s \in d} \left(1-\frac{1}{s}\right) =\ e^{-\gamma}.
	\label{-MMN-}
\end{align} % !!
We find 
\begin{align}
	\notag q_{(p_j)_{j=1}^{10}} \approx&\  12.388\\
	% \notag q_{\sum_{k=1}^{50} \log{p_k} } \approx&\ 35.1180438\\
	\notag q_{(p_j)_{j=1}^{100}} \approx&\  53.275. 
\end{align}

It is elementary that, if $q_{(p_j)_{j=1}^{n}} > 0$ for all $x$, then Theorem \ref{Th1} is true. 

Further, for any integer $n$, it follows through Lemma \ref{-FQN-} for $x = q_{(d_j)_{j=1}^n} + \sum_{j=1}^n \log{d_j}$ and $y = \sum_{j=1}^{n+1} \log{d_j}$ and $z=d_{n+1}$ and  $r_{x,y,z} =  q_{(d_j)_{j=1}^{n+1}}$, that 
\begin{align}
	q_{(d_j)_{j=1}^{n+1}} =\ \left(q_{(d_j)_{j=1}^n} + \sum_{j=1}^{n+1} \log{d_j}\right)^{(1-1/(d_{n+1} - 1)} - \sum_{j=1}^{n+1} \log{d_j}. \label{-QDJ-}
\end{align}
Here, we naturally omit the multiplicative $\prod_{j=1}^n (1-1/d_j)$ from both sides of \eqref{-ZXX-}.

\subsection{Definition} \label{DWJ}

For any integer $j$, let $w_j$ be equal to $p_j$ when $j  < 711 $ and equal to $x$ such that $x/(\log(x)-1) = j$ when $j \ge 711$.
We have
\begin{align}
	\notag w_{710} =&\ 5387\\
\notag w_{711} \approx &\ 5399.359\\
	\notag w_{712} \approx&\  5408.106.
\end{align}

 It is a result of Dusart \cite{2} 
 that for all $x> 5393 = p_{711}$, we have $\pi(x) > x/(\log(x)-1)$. Thus $w_k > p_k$ for all $k>710$.

\begin{lemma} \label{-WKX-}
For all $m > 711$, we have 
\begin{align}
	w_{m+1} - \log(w_m) - w_m  < 0.1534. \label{-HTE-}
\end{align}
\end{lemma}

\begin{proof}   
For any $710 < j \le m$, we have $w_j/j = \log(w_j)-1$, whence we derive, for $c_j= w_{j+1}-\log(w_j)-w_j$, 
\begin{align}
	\notag	w_{j+1} =&\ w_j + \log(w_j) -1 + (\log(w_{j+1})- \log{w_j})(j+1)\\
	%\
	\notag			=&\  w_j + \log(w_j)-1 + \frac{(\log(w_{j+1})-\log{w_j})w_{j+1}}{\log(w_{j+1})-1}\\
	\notag		=&\  w_j + \log(w_j)-1 + \frac{(\log(w_j + \log(w_j)+c_j) -\log{w_j})w_{j+1}}{\log(w_{j+1})-1}\\
		\notag		=&\ w_j + \log(w_j)-1 + \frac{\log(1 + (\log(w_j)+c_j)/w_j)w_{j+1}}{\log(w_{j+1})-1}\\
	<&\ w_j + \log(w_j)-1 + \frac{(\log(w_j)+c_j )w_{j+1}}{(\log(w_{j+1})-1)w_j}. 
%			<&\ w_j + \log(w_j)-1 + \frac{(\log(w_j)+c_j )w_{j+1}}{w_j}.
%(ln(10) + 2)*(10 +ln(10) + 2)/(ln(10 + ln(10) +2)*10)
 \label{-WKL-}
\end{align}
%We have
%\begin{align}
%	\frac{j+1}{j} >\ \frac{j+2}{j+1}.
%\end{align}
%Therefore, for any $a >0$, the identity $\log(xy)-\log(y) = \log(x)$, where $x$ and $y$ are real, implies that, for $t_j = w_j + \log(w_j) + (j+1)a$,
%\begin{align}
%	\log(t_j) - \log(w_j + \log{w_j})	>\ \log(t_j + \log(t_j) + (j+2)a) - \log(\log(t_j + \log(t_j)).
%\end{align}
%Taking $a = \log(w_j) - \log(w_{j-1})$, we thereby have, for each $k \in %\{j, j+1\}$,
%\begin{align}
%	\notag	\log(w_k + \log(w_k) + (k+1)a) - \log(w_k + \log{w_k})
%		\notag >&\  (\log(w_{k+1}) - \log{w_k})(k+1) \\
% 	>&\ \log(w_{k+1}) - \log{w_{k}}. \label{-WAK-}
%\end{align}

With respect to the final term of the final line \eqref{-WKL-}, the fact that $w_{j+1} > w_j$ gives, for all $u$ for which $c_u<1$,
\begin{align}
	\notag \frac{(\log(w_u)+c_u)w_{u+1}}{(\log(w_{u+1})-1)w_u} =&\ \frac{(\log(w_u)+c_u )(w_u + \log(w_u) +c_u)}{(\log(w_u + \log(w_u) +c_u)-1)w_u}\\
	>&\ \frac{(\log(w_{u+1})+c_u )(w_{u+1} + \log(w_{u+1}) +c_u)}{(\log(w_{u+1} + \log(w_{u+1}) +c_u)-1)w_{u+1}}. \label{-WJC-}
\end{align}
Here, for each $k \in \{u, u+1\}$,
\begin{align}
	\notag & 
	\frac{(\log(w_k)+c_u )(w_k + \log(w_k) +c_u)}{(\log(w_k + \log(w_k) +c_u)-1)w_k}\\
	&\ \ \ \ \ \  \ \ \ \ \ \ \ =\ \frac{\log(w_k)w_k + \log^2(w_k) + 2\log(w_k)c_u + c_u w_k + c_u^2}{\log(w_k + \log(w_k) + c_u)w_k - w_k }. \label{-WCC-}
\end{align}
Thus, for $m_k = w_{k+1} + \log(w_{k+1}) + c_u$, the fact that $(w_k + \log{w_k})/w_k > (m_k+\log{m_k})/m_k$ 
%combined with the fact that $ w_j+\log{w_j}+c_j  =&\ w_{j+1}$ and
%\begin{align}
% \log(w_k)-1<\log(w_{k+1})-1)} \label{-WNM-}
%\end{align}
%implies the second relation of \eqref{-WJC-}. Here, we may divide both sides by $w_k$ in accordance with the fact that $k= w_k/(\log(w_k)-1)$, and in turn the left side of the first relation of \eqref{-WCC-} for $k=j+1$. Hence 
implies the second relation of \eqref{-WJC-}. 

We note that $1+c_j$ is equal to the final term of the right side of the first relation of \eqref{-WKL-}. We have $j+1 = w_{j+1}/(\log(w_{j+1})-1)$. Therefore, combining \eqref{-WKL-}, specifically for its first and final lines, and \eqref{-WJC-} gives
 \begin{align}
\notag	c_j =&\ w_{j+1} - \log(w_j) - w_j\\
\notag  \le &\ w_{712} - \log(w_{711}) - w_{711} \\ 
	\notag <&\ \frac{(\log(w_{711})+c_{711})w_{712}}{(\log(w_{712})-1)w_{711}} -1 \\
	<&\ 0.1534
\end{align}
which, for $j=m$, completes the proof.
\end{proof}

\subsection{Definition} \label{D.BX}

For any $x>1$ let $b_x$ be the real number for which 
$x^{1+ 1/b_x} = x+\log{x}$.

We have $x^{\log(x+\log{x})/\log{x}} = x+\log{x}$. Therefore 
\begin{align}
\notag	b_x =&\ \frac{\log{x}}{\log(x + \log{x})-\log(x)}\\
\notag =&\	\frac{\log{x}}{\log\left(\frac{x + \log{x}}{x}\right)}\\
\notag	=&\ \frac{\log{x}}{\log(1 + \log(x)/x)}\\
\notag 	>&\ \frac{\log{x}}{\log(x)/x}\\
	=&\ x. \label{-BXX-}
\end{align}

\subsection{Remark} \label{RMK}
 We shall prove our ensuing lemma initially for all $m$ for which a certain condition is true. Our key step then is to show, for all $m>710$, that the combination of Lemma \ref{-WKX-} and Lemma \ref{-GTT-} implies that the base for the first term of the right side of the first relation of our forthcoming \eqref{-VTH-} increases by greater increments than does $w_{m+1}$ as $m$ increases by increments of one. This will enable us to prove that our condition imposed in Lemma \ref{-GTT-} holds for all $m>710$.

\begin{lemma} \label{-GTT-}
For all $m  > 710$ for which $w_{m+1} < b_{q_{(w_j)_{j=1}^m} - 0.1534 + \sum_{j=1}^m \log{w_j}}$,
\begin{align}
\notag 	\left(q_{(w_j)_{j=1}^m}  + \sum_{j=1}^m \log{w_j}\right)^{1+1/(w_{m+1}-1)}   >&\ \log\left(q_{(w_j)_{j=1}^m} -  0.1534 + \sum_{j=1}^m \log{w_j}\right)\\
\notag 	&\ \ \ \ \ \ \  \ \ \ \ \ \ \ \ \ \   + q_{(w_j)_{j=1}^m} + \sum_{j=1}^m \log{w_j}\\
    >& \  q_{(w_j)_{j=1}^m} + \sum_{j=1}^{m+1} \log{w_j} . \label{-GTV-}
\end{align}
\end{lemma}

\begin{proof}
 It follows through \eqref{-BXX-} for $x= q_{(w_j)_{j=1}^n} - 0.1534  + \sum_{j=1}^m \log{w_j}$, that
\begin{align}
	\notag \log{w_{m+1}} <\ \log\left(q_{(w_j)_{j=1}^m} -  0.1534 + \sum_{j=1}^m \log{w_j}\right).
\end{align}
Since $w_{m+1}-1 < w_{m+1}$, the first relation of \eqref{-GTV-} follows by \eqref{-BXX-}. We thereby have \eqref{-GTV-}.
\end{proof}

\begin{lemma} \label{-WMT-}
Let $m$ be as in Lemma \ref{-GTT-}. Then
\begin{align}
 q_{(w_j)_{j=1}^{m+1}} - q_{(w_j)_{j=1}^m} +  \log{w_{m+1}}  >\ w_{m+1} -  w_m. \label{-WMW-}
\end{align}
\end{lemma}

\begin{proof}
Below, we use the fact that, for any positive $c$ and $d$, we have $(c+0.1534)^d -  c^d >  0.1534 $. For all $m  > 710$, combining \eqref{-QDJ-} for $(d_j)_{j=1}^{n+1} = (w_j)_{j=1}^{m+1}$, and Lemma \ref{-GTT-}  gives
\begin{align}
\notag	q_{(w_j)_{j=1}^{m+1}} =&\	\left(q_{(w_j)_{j=1}^m} + \sum_{j=1}^m \log{w_j}\right)^{1+1/(w_{m+1}-1)} - \sum_{j=1}^{m+1} \log{w_j}\\
 >&\ \left(q_{(w_j)_{j=1}^m}  -  0.1534 + \sum_{j=1}^m \log{w_j}\right)^{1+1/(w_{m+1}-1)} +  0.1534  - \sum_{j=1}^{m+1} \log{w_j}.\label{-VTH-}
\end{align}
Combining \eqref{-VTH-} and Lemma \ref{-WKX-} gives \eqref{-WMW-}.
\end{proof}

\subsection{Proof of Theorem \ref{Th1}}

\begin{proof}
It is elementary that \eqref{-NIC-} follows by the forthcoming proof that, for all $n$,
\begin{align}
	\notag q_{(p_j)_{j=1}^n} >&\ 	q_{(w_j)_{j=1}^n}\\
	>&\ 0. 
	% \label{-QPM-}
\end{align}

Recall that for any $x>1$, $b_x$ is the real number for which $x^{1+1/b_x} = x + \log{x}$. The combination of Lemma \ref{-GTT-} and Lemma \ref{-WMT-} implies that, for $a \in \{0,1\}$, the inequalities
\begin{align}
	\notag w_{m+1+a} < &\ q_{(w_j)_{j=1}^{m+a}}  - 0.1534 + \sum_{j=1}^{m+a} \log(w_j) \\
 <&\ b_{q_{(w_j)_{j=1}^{m+a}} -  0.1534 + \sum_{j=1}^{m+a} \log(w_j)}
\end{align}
are preserved when $a$ is increased from zero to one.
(The second relation is found through \eqref{-BXX-} for $x = \sum_{j=1}^{m+a} \log{w_j}$.) Therefore, Lemma \ref{-GTV-} holds for all $m>710$.

We find $q_{(w_j)_{j=1}^{711}} \approx 161.6590$ and $\sum_{j=1}^{711} \log{w_j} \approx 5270.9510$. Then 
\begin{align}
\notag	q_{(w_j)_{j=1}^{711}} + \sum_{j=1}^{711} \log{w_j} \approx &\ 5432.6100\\
	\notag >&\ w_{712} + 0.1534\\
	\approx &\ 5408.2592. \label{-QSL-}
\end{align}   
We have I and II, below.

\noindent I. It follows by \eqref{-QSL-} that \eqref{-GTV-} holds for $m = 711$. 

\noindent II. The combination of \eqref{-GTV-}, now for all $m = n>710$, and \eqref{-QDJ-} for $(d_j)_{j=1}^{n+1} = (w_j)_{j=1}^{m+1}$, implies that $q_{(w_j)_{j=1}^n}$ is an increasing function of $n$.

 The final step of our proof is deduced using III to VI, below.

\noindent III. By \eqref{-QDJ-} we have, for any sequence $d$ of real numbers,
\begin{align} 
	q_{(d_j)_{j=1}^{n+1}} =\ \left(q_{(d_j)_{j=1}^n} + \sum_{j=1}^n d_j\right)^{1+1/(d_{n+1}-1)} - \sum_{j=1}^{n+1} d_j. \label{-XDJ-}
\end{align}

\noindent IV. Using $(d_{j})_{j=1}^n = (w_{j})_{j=1}^n$, we have the following. The value $w_{n+1}$ is, for all $n$, seen to be smaller than $q_{(w_j)_{j=1}^n} - 0.1534 + \sum_{j=1}^n \log{w_j}$, and in turn smaller than
\begin{align}
	b_{q_{(w_j)_{j=1}^n} - 0.1534 + \sum_{j=1}^n \log{w_j}}. \label{-BQQ-}
\end{align}
Consequently, the left side of \eqref{-XDJ-} can be shown (as below) to be an increasing function of $n$.

\noindent V. Lemma \ref{-WKX-} gives $w_{n+1}-w_n -\log{w_n} <  0.1534$. Through \eqref{-XDJ-}, we have used the fact that, for any positive $c$ and $d$, we have $(c+0.1534)^d -  c^d >  0.1534$. This shows that $w_{n+1}$, as used in our exponent $1+1/(w_{n+1}-1)$, remains sufficiently small compared to our base, justifying IV. 

\noindent VI. A detail in the justification of IV is that \eqref{-BXX-} shows that, for any $x>0$, we have $b_x > x$.

Recall our statement of Lemma \ref{-TPN-}, which is as follows. Let $n$ be any integer. Let $u_n > p_n$. Then for $r_n$ and $s_n$ such that 
\begin{align}
\notag	\log(r_n+\theta(p_n))\prod_{j=1}^n \left(1-\frac{1}{p_j}\right)  =&\ \log\left(s_n+ \sum_{j=1}^n \log{u_j}\right)\prod_{j=1}^n \left(1-\frac{1}{u_j}\right)\\
	=&\ e^{-\gamma}
\end{align}
 we have $r_n > s_n$. Recall further that $w_n = p_n$ for all $n \le 710$, and $w_n > p_n$ for all $n>710$.

 Lemma \ref{-TPN-} for $u_n = w_n$ and $r_n = q_{(p_j)_{j=1}^n}$ and $s_n = q_{(w_j)_{j=1}^n}$, implies that, for all $n>710$,
\begin{align}
	q_{(p_j)_{j=1}^n} >\ 	q_{(w_j)_{j=1}^n}. \label{-QPJ-}
\end{align}
We have shown in II that $q_{(w_j)_{j=1}^n}$ is an increasing function of $n$. Therefore, for $n>710$, 
\begin{align}
	q_{(w_j)_{j=1}^n} >\ 0. \label{-QPN-}
\end{align}
We justify \eqref{-QPN-} through I. Further, \eqref{-NIC-} holds for all $n \le 711$. Therefore, combining \eqref{-QPN-} and \eqref{-QPJ-} gives, for all $n$, \eqref{-NIC-}.
% At least for n<5.76×10^6, the sequence of finite products approaches e^gamma strictly from above (Rosser and Schoenfeld 1962). 
% Rosser, J. B. and Schoenfeld, L. "Approximate Formulas for Some Functions of Prime Numbers." Ill. J. Math. 6, 64-94, 1962. 
\end{proof}

% \pagebreak 

\newpage
\subsection{Summary}

\begin{lemma} \label{-TPM-}
Let $n$ be any integer. Let $u_n > p_n$. Then for $r_n$ and $s_n$ such that 
\begin{align}
\notag	\log \left(r_n+\sum_{j=1}^n p_j \right)\prod_{j=1}^n \left(1-\frac{1}{p_j}\right)  =&\ \log\left(s_n+ \sum_{j=1}^n \log{u_j}\right)\prod_{j=1}^n \left(1-\frac{1}{u_j}\right)\\
	=&\ e^{-\gamma}  \label{-EGB-}
\end{align}
 we have $r_n > s_n$.
\end{lemma}

\begin{proof}
We have
\begin{align}
	\notag	\log\left(\sum_{j=1}^n p_j\right)\prod_{j=1}^n \left(1-\frac{1}{p_j}\right) <&\ 		\log\left(\sum_{j=1}^n \log{u_j} \right)\prod_{j=1}^n \left(1-\frac{1}{p_j}\right) \\
			<&\	\log\left(\sum_{j=1}^n \log{u_j} \right)\prod_{j=1}^n \left(1-\frac{1}{u_j}\right). \label{-TQM-}
\end{align}
Therefore, \eqref{-EGB-} follows.
\end{proof}

For any sequence $d$ of real numbers, let $q_{d}$ be the real number for which
\begin{align}
	\log\left( q_{d} + \sum_{s \in d} \log{s}\right)
	%\frac{p_k(m-1)}{(p_k-1)m}
	\prod_{s \in d} \left(1-\frac{1}{s}\right) =\ e^{-\gamma}.
	\label{-MMM-}
\end{align}

Combining Lemma \ref{-TPM-} for $r_n = q_{(p_j)_{j=1}^n}$ and $s_n = (w_j)_{j=1}^n$ and $u_n = w_n$, with the fact that $w_n>p_n$ gives \eqref{-QPJ-}, for all $n$ greater than a value that we have seen to be suitably small.

For Lemma \ref{-FQN-}, by using $x = q_{(w_j)_{j=1}^n} + \sum_{j=1}^n \log{w_j}$ and $y = \sum_{j=1}^{n+1} \log{w_j}$ and $z=w_{n+1}$ and  $r_{x,y,z} =  q_{(w_j)_{j=1}^{n+1}}$, where $w_n$ is our upper bound on $p_n$. we obtain
\begin{align}
q_{(w_j)_{j=1}^{n+1}}	=\ \left( q_{(w_j)_{j=1}^n} + \sum_{j=1}^n \log{w_n} \right)^{1+1/(w_{n+1}-1)} - \sum_{j=1}^{n+1} \log(w_n). \label{-BYJ-}
\end{align}

We have shown that the first term of the right side of \eqref{-BYJ-} stays greater than second term, whence \eqref{-NIC-} is seen to be true. We show this by the fact that the incremental increase given by $w_{n+1} - w_n$ is, for a constant $a$, less than $\log(w_n) + a$ as $n$ increases by increments of one. Consequently the value, $w_{n+1}-1$, seen in the exponent, stays smaller than our value, $b_x$, for which $x^{1+1/b_x} = x+\log{x}$, for $x= q_{(w_j)_{j=1}^n} -  a + \sum_{j=1}^n \log{w_n}$. Indeed \eqref{-BXX-} shows that $b_x > x$. Here, $a$ (as found for all $n$ greater than a suitably low value) is found by Lemma \ref{-WKX-}.

Specifically, the combination of Lemma \ref{-WKX-} and Lemma \ref{-GTT-} implies that the base for the first term of the right side of the first relation of \eqref{-VTH-} increases by greater increments than does $w_{m+1}$ as the index $m$ increases by increments of one. This is proven in Lemma \ref{-WMT-}.

For \eqref{-VTH-} we use the fact that, for any positive $c$ and $d$, $(c+a)^d-c^d > a$. We see that $x$ is sufficiently high that $\log{x}$ is in turn sufficiently high that $q_{(w_j)_{j=1}^n}$ is seen to be an increasing function of $n$. The final ingredient is Lemma \ref{-TPM-}, which is a restatement of Lemma \ref{-TPN-}.

 %\pagebreak 

\section*{Conclusion}
Combining Theorem \ref{Th1} with the cited work of Nicolas proves the Riemann hypothesis.

\end{document}